\documentclass[letterpaper, 10 pt, conference]{ieeeconf}
\IEEEoverridecommandlockouts
\usepackage{amsmath,amssymb,amsfonts,bm,mathtools,yhmath}

\usepackage{amsthm}

\usepackage{cite}

\usepackage{booktabs}
\usepackage{multirow}
\usepackage{array}
\usepackage{graphicx}
\usepackage{xcolor}
\usepackage{tikz}
\usepackage{pgfplots}
\pgfplotsset{compat=1.18}
\usetikzlibrary{arrows.meta,positioning,fit,calc,shapes.geometric,backgrounds,decorations.pathreplacing}
\usepackage[hidelinks,breaklinks=true]{hyperref}
\usepackage{url}
\theoremstyle{thmstyleone}%
\newtheorem{theorem}{Theorem}[section]

\newtheorem{proposition}[theorem]{Proposition}

\theoremstyle{definition}

\newtheorem{assumption}{Assumption}

\theoremstyle{remark}

\newcommand{\Voc}{\mathcal{V}}
\newcommand{\Vstar}{\Voc^{*}}
\newcommand{\R}{\mathbb{R}}

\newcommand{\LN}{\operatorname{LN}}
\newcommand{\MHA}{\operatorname{MHA}}
\newcommand{\MLP}{\operatorname{MLP}}
\newcommand{\GELU}{\operatorname{GELU}}
\newcommand{\softmax}{\operatorname{softmax}}
\newcommand{\Concat}{\operatorname{Concat}}

\newcommand{\one}{\mathbf{1}}

\DeclareMathOperator*{\argmax}{arg\,max}

\begin{document}

\title{\LARGE \bf
On the Multi-Index, Multi-Rate, and Multi-Phase \\ Dynamics
of Decoder-Only Language Models: \\ A Unified Hybrid Framework
for Generative and Agentic Systems
}

\author{Ali Pakniyat, \textit{Member, IEEE}%
\thanks{A.~Pakniyat is with the Multi-Modal Multi-Agent Control ($\mathcal{M}^3\!\mathcal{A\,C}$) Lab (e-mail: {\tt m3ac.lab@gmail.com}).}%
}

\maketitle
\thispagestyle{empty}
\pagestyle{empty}

%=======================================================================
\begin{abstract}

Large language models (LLMs) are increasingly deployed as computational engines in autonomous decision-making and planning loops, yet their systems and control treatment remains hindered by architectural simplifications, index conflations, and informal descriptions of tool interactions. This paper presents a control-theoretic formulation of decoder-only language models as multi-index, multi-rate systems, and sets the stage for a stochastic hybrid systems framework to govern the multi-phase dynamics of agentic tool interaction. We formalize the architecture across three hierarchically coupled evolution indices: 
(i) an ultrafast feedforward cascade of transformer blocks across layer depth, where layer normalization is cast as a spherical projection and key--value caching is proven to be an exact internal state realization via causal prefix invariance;
(ii) an uncontrolled stochastic difference recursion over token generation steps, where finite context truncation induces a time-homogeneous Markov chain; and
(iii) an autonomous mode-switching mechanism governing transitions between token generation and tool execution regimes, where tool invocations are triggered upon trajectory arrival at switching manifolds, followed by exogenous state jump maps that augment the context string with external observations. By defining a prompt-dependent evaluator over successive evaluable claims, we obtain a task-level error process whose fault-free histories and expected error growth admit bounds under conditional fault-hazard and error-drift assumptions.

\end{abstract}

%=======================================================================
%=======================================================================
\section{Introduction}
\label{sec:intro}
%=======================================================================
The transformer architecture, introduced originally as an encoder-decoder system for sequence transduction \cite{vaswani2017attention}, has converged into the ubiquitous ``decoder-only'' configuration in modern foundation models, including OpenAI's GPT series \cite{radford2019gpt2}, Meta's LLaMA, and Mistral. Unlike encoder-decoder architectures driven by exogenous reference inputs (such as a source text in machine translation), a decoder-only model operates as an unforced, causal sequence generator that evolves purely via recursive token concatenation. Within each token-generation step, the circulating sequence of discrete tokens is mapped via fixed embedding operators into an initial continuous tensor state, which propagates through a cascade of spatial- and channel-mixing transformer blocks under a lower-triangular causal attention kernel. In modern autonomous and agentic applications, this sequence generator operates across two primary modalities: \emph{generative} open-loop rollout, wherein text evolves through unforced recursive token emission, and \emph{agentic} tool-augmented interaction \cite{yao2023react,schick2023toolformer}, wherein internal generation halts to coordinate with external computational engines, databases, or physical robotic actuators \cite{ahn2022saycan}.

Recognizing that generative foundation models are increasingly embedded inside critical decision-making loops, the systems and control community has initiated several lines of research to analyze transformers using rigorous dynamical tools. Prompt engineering has been cast as an optimal control problem with an explicit notion of reachability \cite{bhargava2023magicword}, depth layers have been modeled as continuous-time interacting particle systems and consensus dynamics on manifolds \cite{geshkovski2023clusters,silvestre2024attention,abella2025consensus}, attention routing has been studied through the lens of Markov chains \cite{ildiz2024markov}, next-token prediction has been interpreted via nonlinear filtering and stochastic optimal control duality \cite{chang2025dualfilter,chang2025can, kudre2026transformer}, and robotic deployment safety has been investigated using control barrier functions \cite{ravichandran2025guardrails}.

Despite this fruitful expansion, existing representations in the literature exhibit notable modeling simplifications and structural ambiguities. In reachability-based prompt formulations such as \cite{bhargava2023magicword}, the entire multi-layer transformer network is collapsed into an abstract static operator; this abstraction produces modeling discrepancies where the control input is introduced as a continuation token at one stage, yet treated as an amended initial condition in the state space at another. Conversely, continuous-time depth formulations \cite{geshkovski2023clusters,silvestre2024attention,abella2025consensus} provide valuable convergence results by viewing layer depth as ordinary differential equations, yet they deliberately isolate the self-attention operator under idealized conditions---frequently omitting layer normalizations, projection matrices, and the position-wise Multi-Layer Perceptrons (MLPs) that constitute over two-thirds of the network's non-embedding parameters. More fundamentally, control expositions routinely conflate the deterministic, finite-horizon feedforward evaluation across layer depth $\ell \in \{1,\dots,L\}$ with the uncontrolled stochastic difference recursion that generates tokens across generation time $t \in \mathbb{N}$; these two processes evolve on distinct index sets, inhabit different mathematical spaces, and possess fundamentally different dynamical properties. Finally, in the machine learning literature on agentic tool use \cite{yao2023react,schick2023toolformer}, tool invocation is treated as an informal software wrapper or string-delimited prompt hook. Rather than recognizing these interactions as formal dynamical phenomena, existing control treatments lack a unified framework to model tool-augmented generation as a hybrid system with discrete modes, state jump maps, and contractive error dynamics.

%=======================================================================
% FULL TIKZ FIGURE 1: TWO PANELS
%=======================================================================
\begin{figure*}[t]
\centering

%-----------------------------------------------------------------------
% Panel (a): full decoder-only architecture
%-----------------------------------------------------------------------
\resizebox{\textwidth}{!}{
\begin{tikzpicture}[font=\small,
  box/.style={draw,rounded corners=2pt,minimum height=9mm,minimum width=15mm,align=center,fill=blue!4},
  blk/.style={draw,rounded corners=2pt,minimum height=13mm,minimum width=20mm,align=center,fill=orange!8},
  ar/.style={-{Latex[length=2mm]},thick},
  lbl/.style={font=\scriptsize,text=black!65}]

\node[font=\bfseries, anchor=west] at (10,-1.7)
{(a) Decoder-only architecture};

\node[box] (tok) {Tokenizer\\$s_t=(w_1,\ldots,w_n)$};
\node[box,right=9mm of tok] (emb) {Embedding \eqref{eq:embed_matrix}\\$H^{(0)}=U(s_t) W_e + P_n W_p$};
\node[blk,right=11mm of emb] (b1) {Transformer \\ Block $1$\\ $\mathcal{B}_1$};
\node[right=6mm of b1] (dots1) {$\cdots$};
\node[blk,right=8mm of dots1] (b2) {Transformer \\ Block $\ell$\\ $\mathcal{B}_{\ell}$};
\node[right=6mm of b2] (dots) {$\cdots$};
\node[blk,right=9mm of dots] (bL) {Transformer \\ Block $L$\\ $\mathcal{B}_L$};
\node[box,right=9mm of bL] (lnf) {$\LN_f$ and\\unembedding \eqref{eq:logits_readout}};
\node[box,right=14mm of lnf] (sm) {$\softmax$ Kernel \eqref{eq:temp_distribution} \\ $p_\theta(w_{n+1} = v_k \mid s_t)$};
\node[box,right=9mm of sm,fill=green!6] (samp) {Selection\\ $w_{t+1}$};
\draw[ar] (tok)--(emb);
\draw[ar] (emb)--node[lbl,above]{$H^{(0)}$}(b1);
\draw[ar] (b1)--node[lbl,above]{$\,H^{(1)}$}(dots1);
\draw[ar] (dots1)--node[lbl,above]{$H^{(\ell-1)}\,$}(b2);
\draw[ar] (b2)--node[lbl,above]{$\,H^{(\ell)}$}(dots);
\draw[ar] (dots)--node[lbl,above]{$H^{(L-1)}\,$}(bL);
\draw[ar] (bL)--node[lbl,above]{$H^{(L)}$}(lnf);
\draw[ar] (lnf)--node[lbl,above]{$z(s_t)$}(sm);
\draw[ar] (sm)--(samp);
\coordinate (down) at ($(samp.south)+(0,-9mm)$);
\draw[ar,dashed] (samp.south)--(down) -- node[lbl,above,pos=0.62]{Next token generation: $s_{t+1} = \Pi_T(s_t \circ w_{t+1})$} ($(tok.south)+(0,-9mm)$) -- (tok.south);
\draw[decorate,decoration={brace,amplitude=4pt},black!55] ($(b1.north west)+(-1mm,3mm)$) -- ($(bL.north east)+(1mm,3mm)$) node[midway,above=2pt,lbl]{$L$ identical pre-normalized transformer blocks (e.g., $L=12$ for GPT-2-small)};
\end{tikzpicture}}

\vspace{2mm}

%-----------------------------------------------------------------------
% Panel (b): transformer block
%-----------------------------------------------------------------------
\resizebox{\textwidth}{!}{%
\begin{tikzpicture}[
  font=\scriptsize,
  >=Latex,
  node distance=5mm and 10mm,
  box/.style={
    draw,
    rounded corners=2pt,
    align=center,
    minimum height=7mm,
    minimum width=24mm,
    fill=white,
    inner sep=2pt
  },
  qkv/.style={
    draw,
    rounded corners=2pt,
    align=center,
    minimum height=7mm,
    minimum width=23mm,
    fill=blue!8,
    inner sep=2pt
  },
  qkvshadow/.style={
    draw=blue!35,
    rounded corners=2pt,
    minimum height=8.5mm,
    minimum width=27mm,
    fill=blue!3,
    inner sep=2pt
  },
  attn/.style={
    draw,
    rounded corners=2pt,
    align=center,
    minimum height=7mm,
    minimum width=27mm,
    fill=green!8,
    inner sep=2pt
  },
  attnshadow/.style={
    draw=green!50!black!35,
    rounded corners=2pt,
    minimum height=7mm,
    minimum width=27mm,
    fill=green!3,
    inner sep=2pt
  },
  mixshadow/.style={
    draw=orange!60!black!35,
    rounded corners=2pt,
    minimum height=7.4mm,
    minimum width=30mm,
    fill=orange!4,
    inner sep=2pt
  },
  mlp/.style={
    draw,
    rounded corners=2pt,
    align=center,
    minimum height=7mm,
    minimum width=27mm,
    fill=purple!8,
    inner sep=2pt
  },
  sum/.style={
    circle,
    draw,
    fill=yellow!20,
    minimum size=5.5mm,
    inner sep=0pt
  },
  arrow/.style={->, semithick},
  flow/.style={
    midway,
    above,
    font=\scriptsize,
    align=center,
    text=black!75,
    inner sep=1pt
  }
]
\node[font=\small\bfseries, anchor=west] at (4.2,-2.7)
{(b) Expanded pre-normalized transformer block $\mathcal{B}_\ell$};

\coordinate (block_in) at (-4.0,0.8);
\coordinate (block_out) at (20.2,-0.93);

% Sublayer 1: Attention
\node[box] (ln1) at (0,0.8)
{Layer Normalization \eqref{eq:ln_def} \\[3pt] $z_i^{(\ell)}\!=\!\LN_{\ell,1}\!(h_i^{(\ell-1)})$};

\node[qkvshadow] at ($(3.5,2.02)+(2.5pt,2.5pt)$) {};
\node[qkvshadow] at ($(3.5,2.02)+(1.25pt,1.25pt)$) {};
\node[qkv] (Q) at (3.5,2.0)
{Query Projection \eqref{eq:qkv_proj} \\[3pt] $Q^{(\ell, m)} \!=\! Z^{(\ell)} W_Q^{(\ell, m)}$};

\node[qkvshadow] at ($(3.5,0.8)+(2.5pt,2.5pt)$) {};
\node[qkvshadow] at ($(3.5,0.8)+(1.25pt,1.25pt)$) {};
\node[qkv] (K) at (3.5,0.8)
{Key Projection \eqref{eq:qkv_proj} \\[3.5pt] $K^{(\ell, m)} \!=\! Z^{(\ell)} W_K^{(\ell,m)}$};

\node[qkvshadow] at ($(3.5,-1.4)+(2.5pt,2.5pt)$) {};
\node[qkvshadow] at ($(3.5,-1.4)+(1.25pt,1.25pt)$) {};
\node[qkv] (V) at (3.5,-1.4)
{Value Projection \eqref{eq:qkv_proj} \\[3.5pt] $V^{(\ell, m)} \!=\! Z^{(\ell)} W_V^{(\ell,m)}$};

\node[attnshadow] at ($(7.0,0.82)+(2.5pt,2.5pt)$) {};
\node[attnshadow] at ($(7.0,0.82)+(1.25pt,1.25pt)$) {};
\node[attn] (score) at (7.0,0.8)
{Causal Affinity Kernel\\$S^{(\ell,m)}$: \eqref{eq:scores}};

\node[attnshadow] at ($(score.south)+(0,-9.5mm)+(2.5pt,2.5pt)$) {};
\node[attnshadow] at ($(score.south)+(0,-9.5mm)+(1.25pt,1.25pt)$) {};
\node[attn, below=6mm of score] (soft)
{Row-wise softmax\\$A^{(\ell,m)}$: \eqref{eq:attention_matrix}};

\node[mixshadow] at ($(10.9,-1.4)+(2.5pt,2.5pt)$) {};
\node[mixshadow] at ($(10.9,-1.4)+(1.25pt,1.25pt)$) {};
\node[box, fill=orange!12] (v_mix) at (10.9,-1.4)
{Value Mixing\\[3pt] $O^{(\ell,m)}=A^{(\ell,m)}V^{(\ell,m)}$};

\node[box] (mha_out) at (10.9,1.8)
{Head Concat. \& Projection \eqref{eq:mha_output}\\[3pt]
$\MHA_\ell = [O^{(\ell,m)}]_{m=1}^{N_h} W_O^{(\ell)} + \one_n b_O^{(\ell)}$};

\node[sum] (sum1) at (14.4,1.8) {$+$};

% Sublayer 2: MLP
\node[box] (ln2) at (14.4,0.4)
{Layer Normalization \eqref{eq:ln_def} \\[3pt] $R^{(\ell)}=\LN_{\ell,2}(\widetilde{H}^{(\ell)})$};

\node[mlp, below=6mm of ln2] (mlper)
{Multi-Layer Perceptron\\$\MLP_\ell$: \eqref{eq:mlp_matrix}};

\node[sum, right=17mm of mlper] (sum2) {$+$};

% Enclosing Block Box
\begin{scope}[on background layer]
  \draw[draw=orange!80!black, line width=0.8pt, rounded corners=4pt, fill=orange!8] (-1.8, -2.4) rectangle (18.2, 3.1);
  \node[anchor=north east, font=\scriptsize\bfseries, text=orange!90!black] at (2.6, -1.8) {\large Transformer Block $\mathcal{B}_\ell$};
\end{scope}

% Boundary & Internal Connections
\draw[arrow] (block_in) -- node[flow, pos=0.37] {$H^{(\ell-1)}\in\R^{n\times d}$} (ln1);
\draw[arrow] (sum2) -- node[flow, pos=0.6] {$H^{(\ell)}\in\R^{n\times d}$} (block_out);
\draw[arrow] (ln1.east) -- ++(0.55,0) |- (Q.west);
\draw[arrow] (ln1.east) -- node[flow, pos=0.34] {$Z^{(\ell)}$} (K.west);
\draw[arrow] (ln1.east) -- ++(0.55,0) |- (V.west);
\draw[arrow] (Q.east) -| (score.north);
\draw[arrow] (K.east) -- (score.west);
\draw[arrow] (score) -- (soft);
\draw[arrow] (soft.east) -| node[flow, above=2pt, pos=0.5] {$A^{(\ell,m)}$} ($(v_mix.north west)+(0.5,0)$);
\draw[arrow] (V.east) -- node[flow, pos=0.75, below=1pt] {$V^{(\ell,m)}$} (v_mix.west);
\draw[arrow] (v_mix) -- (mha_out);
\draw[arrow] (mha_out) -- node[flow,above=1pt] {$\MHA_\ell$} (sum1);
\draw[arrow] ($(ln1.west)+(-0.35,0)$) -- ++(0,1.9) -- node[flow, above] {Residual stream: $H^{(\ell-1)}$} ($(sum1.north)+(0,0.6)$) -- (sum1.north);
\draw[arrow] (sum1) -- node[flow, right, pos=0.5] {$\widetilde{H}^{(\ell)}$} (ln2);
\draw[arrow] (ln2) -- node[flow, right, pos=0.5] {$R^{(\ell)}$} (mlper);
\draw[arrow] (mlper) -- node[flow, above, pos=0.5] {$\MLP_\ell(R^{(\ell)})$} (sum2);
\draw[arrow] ($(sum1.south)+(0,-0.15)$) -- ++(3.34,0) -- node[flow, left, pos=1] {$\widetilde{H}^{(\ell)}$} ($(sum2.north)+(0,0.5)$) -- (sum2.north);

\end{tikzpicture}}
\vspace{-6mm}
\caption{System architecture of a decoder-only transformer. (a) Global generation recursion across time $t \in \mathbb{N}$ and deterministic depth cascade over $\ell \in \{1, \dots, L\}$. (b) Detailed state-space structure of transformer block $\mathcal{B}_\ell$ showing normalization projections, parallel multi-head causal attention routing, and position-decoupled channel mixing.}
\label{fig:architecture}
% \vspace{-6mm}
\end{figure*}

This paper eliminates these ambiguities by establishing an axiomatic, dimensionally exact control-theoretic framework that unifies both generative and agentic decoder-only language models. The architecture is formulated as a multi-index, multi-rate, and multi-phase dynamical system:

\textbf{1) Multi-Index Structure:} The system coordinates three hierarchically coupled index sets: (i) a spatial depth cascade $\ell \in \{1,\dots,L\}$, (ii) an uncontrolled discrete-time stochastic recursion over generation clock $t \in \mathbb{N}$, and (iii) a sequence of mode switching events indexed by $k \in \mathbb{N}$ marking the incurred number of transitions between discrete operational modes $q \in Q = \{q_{G}, q_{E}\}$.

\textbf{2) Multi-Rate Timing:} The indices operate across fundamentally distinct physical time scales: (i) an ultrafast micro-scale governing the feedforward layerwise tensor updates across $\ell \in \{1,\dots,L\}$ at hardware clock speed; (ii) an intermediate, nearly periodic (almost fixed duration) token-generation period marked by \mbox{$t \in \mathbb{N}$}, where each token step requires the complete spatial depth cascade to finish and memory bandwidth to clear; and (iii) an asynchronous macro-scale governing the switching event indices $k \in \mathbb{N}$, which occur at a~priori unknown token generation instants $t_k$ whenever the trajectory reaches a switching manifold representing external tool calls and execution completions.

\textbf{3) Multi-Phase Dynamics:} The dynamic evolution of the system is governed by two discrete modes $q \in Q = \{q_{G}, q_{E}\}$: an uncontrolled autoregressive generation mode $q_{G}$ (where the sequence recursion evolves via recursive token emission) and an external tool update mode $q_{E}$ (where internal generation is suspended while external computational or physical actions evolve). The interplay between these modes is governed by two instantaneous hybrid mechanisms: (i)~an~autonomous mode switching $q_{G} \to q_{E}$ triggered whenever the context trajectory hits the switching manifold $m_{q_{G}, q_{E}}$ (upon emitting a tool-call or action token), and (ii)~a~return transition $q_{E} \to q_{G}$ governed by a state jump map $\Phi_{q_{E}, q_{G}}$ that augments the context string state with the tool execution output.

By establishing this multi-index, multi-rate, and multi-phase foundation, the paper bridges the gap between static layerwise inference and dynamic temporal feedback, while utilizing stochastic hybrid systems frameworks, particularly  \cite{pakniyat2016stochastic,pakniyat2017relation}, as the mathematical foundation for agentic tool~use.

The remainder of this paper is organized as follows:
Section~\ref{sec:spaces} introduces the sequence and context state
spaces, the truncation operator, and the three decoupled dynamical
phases indexed by layer depth $\ell$, token-generation time $t$, and
switching time $k$. Section~\ref{sec:cascade} develops the
deterministic layerwise cascade, including the canonical-basis
representation of token and positional embeddings, the spherical
projection geometry of layer normalization, the spatial and channel
mixing operators, and the causal prefix invariance that makes
Key--Value (KV) caching an exact internal state realization.
Section~\ref{sec:generation} formulates the uncontrolled stochastic
generation recursion, establishes its time-homogeneous finite-state 
Markov representation under context truncation, and shows that greedy
trajectories are ultimately periodic. It then introduces the
prompt-dependent evaluation framework and derives bounds on fault-free
claim histories and expected error growth under conditional
fault-hazard and error-drift assumptions. Section~\ref{sec:conclusion} provides concluding remarks.

%=======================================================================
\section{State Spaces and Decoupled System Phases}
\label{sec:spaces}
%=======================================================================
A prerequisite for rigorous system-theoretic analysis of LLMs is distinguishing between the discrete representation of symbolic sequences, the spatial feedforward propagation of continuous tensors through network depth, and the discrete-time recursions governing text generation and~tool~execution (see Fig.~\ref{fig:architecture}).

\subsection{Symbolic Sequences and Context Horizons}
Let $\Voc = \{v_1, v_2, \dots, v_V\}$ be a finite alphabet containing $V = |\Voc|$ distinct symbolic tokens (e.g., $V = 50{,}257$ for the Byte Pair Encoding (BPE) tokenizer of GPT-2-small). The set of all token sequences of fixed length $n \in \mathbb{N}$ is denoted by the Cartesian product $\Voc^n$, whose elements are written as ordered tuples $s = (w_1, w_2, \dots, w_n)$ with each $w_i \in \Voc$.

The space of all finite-length token strings is the free~monoid $\Vstar = \bigcup_{n=0}^\infty \Voc^n$ equipped with the associative binary operation of concatenation $\circ : \Vstar \times \Vstar \to \Vstar$ and the empty sequence $\varnothing \in \Voc^0$ acting as the unique identity element ($s \circ \varnothing = \varnothing \circ s = s$).

To perform linear algebraic operations, each discrete symbolic token is embedded into a continuous feature space of dimension $d \in \mathbb{N}$ (e.g., $d = 768$ for the reference GPT-2-small architecture). Consequently, an ordered sequence of $n$ tokens is represented continuously by a sequence state matrix residing in the finite-dimensional Euclidean space $\R^{n \times d}$.

Physical computing architectures impose a finite upper bound on the number of tokens a model can simultaneously attend to in memory. Let $T \in \mathbb{N}$ denote this maximum context horizon (e.g., $T = 1{,}024$ for GPT-2-small). The space of admissible input contexts is the restricted alphabet union $\Vstar_T = \bigcup_{n=1}^T \Voc^n$.

When sequences grow beyond this physical capacity during iterative generation, the memory is governed by the context truncation operator $\Pi_T : \Vstar \to \Vstar_T$, which projects any arbitrary string onto its trailing suffix of length at~most~$T$:
% \vspace{-16pt}
\begin{equation}
\Pi_T(w_1, w_2, \dots, w_n) = (w_{\max(1, n-T+1)}, \dots, w_n).
\label{eq:trunc}
\end{equation}
The map $\Pi_T$ is idempotent ($\Pi_T \circ \Pi_T = \Pi_T$) and acts as a deterministic projection operator on the string monoid.

\subsection{The Three Coupled Dynamical Phases}
% Unlike standard dynamical systems evolving over a single clock, a generative agentic LLM comprises three hierarchically coupled dynamical systems operating over three mutually distinct index sets:
The continuous representation of each token is a row vector in a latent feature space of embedding dimension $d \in \mathbb{N}$ (e.g., $d = 768$ for the reference GPT-2-small architecture). At any fixed generation instant $t \in \mathbb{N}$, the current context sequence $s_t \in \Vstar_T$ of length $n$ is mapped to an initial continuous tensor $H^{(0)} \in \R^{n \times d}$. This tensor propagates through a cascade of $L$ transformer blocks $\mathcal{B}_\ell$ (e.g., $L = 12$ for GPT-2-small, see also Fig.~\ref{fig:architecture}), producing intermediate tensors $H^{(\ell)} \in \R^{n \times d}$ which propagates via state transition over the finite-horizon $\ell \in \{1, \dots, L\}$.

From the terminal row of $H^{(L)}$, a categorical probability kernel $p_\theta(\cdot | s_t)$ is evaluated, according to which~a~continuation token $w_{t+1}$ is sampled, and the context is updated~via:
% \vspace{-12pt}
    \begin{equation}
    s_{t+1} = \Pi_T(s_t \circ w_{t+1}).
    \label{eq:concat_recursion_intro}
    \end{equation}
% \\[-28pt]

This recursion defines an uncontrolled, discrete-time stochastic difference equation evolving over token clock $t$.

Furthermore, in agentic systems and when executing external tools, the above generation in mode $q_G$ is paused upon emitting a tool-call or action
token defined by an autonomous switching manifold (Fig.~\ref{fig:hybrid_automaton}). The system then transitions to external mode $q_{E}$ and executes external physical or computational tasks. The environment observation $y$ is injected back into the context via a state jump map~$s^+ = \Pi_T \big( \Phi_{\scalebox{0.6}{$q_{E}q_G$}}\!\big(\!s_{t_k}, \operatorname{enc}(y)\!\big)\!\big)$, and is marked by count index~$k \in \mathbb{N}$.

\begin{figure}[t]
\centering
\resizebox{0.92\linewidth}{!}{
\begin{tikzpicture}[
    font=\small, 
    >=Latex, 
    every edge/.style={draw, thick},
    mode_node/.style={
      ellipse, 
      draw=black!80, 
      thick,
      minimum width=3.6cm, 
      minimum height=2.2cm, 
      align=center, 
      inner sep=3pt
    }
  ]

  % Mode q_G: Generation
  \node[mode_node, fill=green!19] (qG) at (0,0) {
    \textbf{Mode $\bm{q_G}$: Generation}\\[4pt]
    $s_{t+1} = \Pi_T(s_t \circ w_{t+1})$
  };

  % Mode q_{E}: Execution
  \node[mode_node, fill=red!19] (qE) at (6.2,0) {
    \textbf{Mode $\bm{q_{E}}$: Execution}\\[4pt]
    $x_{\mathrm{env}}^{+} = f_{\mathrm{env}}(x_{\mathrm{env}}, u_{\mathrm{tool}})$
  };

  % Forward transition: Autonomous Switching Manifold Trigger (Curved Above)
  \path[->] (qG) edge [bend left=19] node[above=2pt, font=\scriptsize, align=center] {
    $m_{\scalebox{0.8}{$q_G q_{E}$}}(s) = \texttt{true}$ $\rightarrow$ $u_{\mathrm{tool}}$
  } (qE);

  % Return transition: Reset / Jump Map (Curved Below)
  \path[->] (qE) edge [bend left=19] node[below=2pt, font=\scriptsize, align=center] {$m_{\scalebox{0.8}{$q_{E}q_G$}}(x_{\mathrm{env}}) = \texttt{true}$
    \\[2pt]
    $\downarrow$\\[-3pt]
    $s^+ = \Pi_T\Big( \Phi_{\scalebox{0.8}{$q_{E}q_G$}}\big(s_{t_k}, \operatorname{enc}(y\scalebox{0.8}{$(x_{\mathrm{env}})$})\big)\Big)$
  } (qG);

\end{tikzpicture}
}
\vspace{-2mm}
\caption{Stochastic hybrid automata governing agentic LLM tool interaction, consisting of discrete operational modes $Q=\{q_G, q_{E}\}$, autonomous switchings activated upon arrivals at switching manifolds, and context augmentation for LLM via the state jump map $s^+ = \Pi_T\big( \Phi_{q_{E}q_G}\big(s_{t_k}, \operatorname{enc}(y\scalebox{0.8}{$(x_{\mathrm{env}})$})\big)\big)$.}
\label{fig:hybrid_automaton}
\vspace{-4mm}
\end{figure}

%=======================================================================
\section{The Deterministic Layerwise Cascade of Transformer Blocks}
\label{sec:cascade}
%=======================================================================
To formalize the state transition map $\mathcal{B}_\ell$ introduced in Section~\ref{sec:spaces}, this section details the internal dynamics of the transformer architecture. We first construct the algebraic embedding that maps the discrete sequence $s_t$ to the continuous state $H^{(0)}$. We then characterize the spherical projection geometry of layer normalization, formulate the spatial and channel mixing operations performed by multi-head attention and position-wise MLPs, and establish the exactness of Key--Value caching via causal prefix invariance.

\subsection{Input Embedding Operator and Initial State}
Let $e_i \in \R^V$ denote the canonical standard basis vector whose $i$-th entry is 1 and all other entries are 0. The discrete symbolic string $s_t = (w_1, \dots, w_n) \in \Voc^n$ of context length $n \le T$ is represented algebraically by a canonical selection matrix $U(s_t) \in \{0, 1\}^{n \times V}$:
\begin{equation}
U(s_t) \equiv U\scalebox{0.8}{\!$(w_1,\! ... , w_n)$\!} = \begin{bmatrix} e_{\operatorname{idx}(w_1)}^\top \\ \vdots \\ e_{\operatorname{idx}(w_n)}^\top \end{bmatrix},
\label{eq:selection_matrix}
\end{equation}
where $\operatorname{idx} : \Voc \to \{1, \dots, V\}$ returns the corresponding index of token $w_i$ in the vocabulary set $\Voc$.

Similarly, the spatial positions of tokens within the context window are represented by the leading $n$ canonical basis vectors of $\R^T$, denoted by $p_1, \dots, p_n \in \R^T$. Stacking these yields the positional selection matrix:
\begin{equation}
P_n = \begin{bmatrix} p_1^\top \\ p_2^\top \\ \vdots \\ p_n^\top \end{bmatrix} \equiv \begin{bmatrix} I_n & 0_{n \times (T-n)} \end{bmatrix} \in \{0, 1\}^{n \times T}.
\label{eq:pos_selection_matrix}
\end{equation}

The static learned parameters of the embedding interface comprise the token embedding matrix $W_e \in \R^{V \times d}$ and the absolute positional embedding matrix $W_p \in \R^{T \times d}$ (e.g., $W_e \in \R^{50{,}257 \times 768}$ and $W_p \in \R^{1{,}024 \times 768}$ for GPT-2-small). The initial continuous tensor state $H^{(0)} \in \R^{n \times d}$ entering the depth cascade is constructed as the sum of token and position representations:
\begin{equation}
H^{(0)} = U(s_t) W_e + P_n W_p.
\label{eq:embed_matrix}
\end{equation}
Row $i \in \{1, \dots, n\}$ of $H^{(0)}$, denoted by $h_i^{(0)\top} \in \R^{1 \times d}$, represents the continuous initial state of token position $i$ prior to inter-token interaction.

\subsection{Layer Normalization as an Oblique Spherical Projection}
Modern decoder-only architectures adopt the \emph{pre-layer-normalization} (Pre-LN) convention, applying normalization immediately prior to the attention and MLP sublayers. For any continuous token state $h \in \R^d$, the layer normalization operator $\LN_{\gamma, \beta} : \R^d \to \R^d$ is defined by:
\begin{equation}
\LN_{\gamma, \beta}(h) = \frac{h - \mu(h)\mathbf{1}_d}{\sigma(h)} \odot \gamma + \beta,
\label{eq:ln_def}
\end{equation}
where $\gamma, \beta \in \R^d$ are learned affine gain and bias parameters, $\odot$ denotes the elementwise Hadamard product, and the sample mean and standard deviation operators are:
\begin{equation}
\mu(h) = \frac{1}{d} \mathbf{1}_d^\top h, \quad \sigma(h) = \sqrt{\frac{1}{d} \|(I_d - \tfrac{1}{d}\mathbf{1}_d\mathbf{1}_d^\top)h\|_2^2 + \epsilon},
\label{eq:mu_sigma}
\end{equation}
with numerical regularizer $\epsilon > 0$ (e.g., $\epsilon = 10^{-5}$ for GPT-2-small).

From a geometric control perspective, $\LN$ is not an invertible coordinate transformation, but a nonlinear projection onto an embedded $(d-2)$-sphere in $\R^d$:

\begin{proposition}[Spherical Projection Geometry of Layer Normalization]
\label{prop:ln_sphere}
Let $\Pi_{\mathbf{1}}^\perp = I_d - \frac{1}{d}\mathbf{1}_d \mathbf{1}_d^\top$ denote the orthogonal projection matrix onto the zero-mean hyperplane $\mathbf{1}_d^\perp = \{x \in \R^d : \mathbf{1}_d^\top x = 0\}$. In the asymptotic limit $\epsilon \to 0$, the unparameterized normalization map $\LN_{0}(h) = \frac{\Pi_{\mathbf{1}}^\perp h}{\frac{1}{\sqrt{d}}\|\Pi_{\mathbf{1}}^\perp h\|_2}$ projects all states $h \notin \operatorname{span}\{\mathbf{1}_d\}$ onto the centered Euclidean sphere of radius $\sqrt{d}$ embedded in $\mathbf{1}_d^\perp$:
\begin{equation}
\mathcal{S}^{d-2}_{\sqrt{d}} = \left\{ z \in \R^d : \mathbf{1}_d^\top z = 0, \ \|z\|_2 = \sqrt{d} \right\}.
\end{equation}
Consequently, layer normalization is scale- and shift-invariant, satisfying $\LN_{0}(\alpha h + \beta \mathbf{1}_d) = \LN_{0}(h)$ for all $\alpha \in \R_{> 0}$ and $\beta \in \R$.
\hfill $\square$ \end{proposition}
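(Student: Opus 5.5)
The plan is to verify membership in $\mathcal{S}^{d-2}_{\sqrt{d}}$ directly from the closed-form expression for $\LN_0$, and then obtain the invariance from how $\Pi_{\mathbf{1}}^\perp$ and the Euclidean norm act on $\alpha h + \beta\mathbf{1}_d$.

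\textbf{Step 1: the $\epsilon \to 0$ limit.} Fix $h \notin \operatorname{span}\{\mathbf{1}_d\}$ and set $x = \Pi_{\mathbf{1}}^\perp h$. Since $\ker \Pi_{\mathbf{1}}^\perp = \operatorname{span}\{\mathbf{1}_d\}$, we have $x \neq 0$. Note that $h - \mu(h)\mathbf{1}_d = \Pi_{\mathbf{1}}^\perp h$, and by \eqref{eq:mu_sigma}, $\sigma(h) = \sqrt{\|x\|_2^2/d + \epsilon}$. Hence $\sigma(h) \to \frac{1}{\sqrt d}\|x\|_2 > 0$ as $\epsilon \to 0$. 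Taking $\gamma = \mathbf{1}_d$ and $\beta = 0$ in \eqref{eq:ln_def}, $\LN_{\gamma,\beta}(h)$ therefore converges to the stated $\LN_0(h)$. This is the only place where the hypothesis $h \notin \operatorname{span}\{\mathbf{1}_d\}$ is needed: it keeps the denominator bounded away from zero. This step is the main point of care. On $\operatorname{span}\{\mathbf{1}_d\}$ the limit degenerates to $0/0$, and the regularized map sends such $h$ to $0$, not onto the sphere.

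\textbf{Step 2: membership in the sphere.} Write $z = \LN_0(h) = \sqrt{d}\, x/\|x\|_2$. Because $\Pi_{\mathbf{1}}^\perp$ is symmetric and $\mathbf{1}_d^\top \Pi_{\mathbf{1}}^\perp = 0$, we get $\mathbf{1}_d^\top z = 0$. Scalar homogeneity of the norm gives $\|z\|_2 = \sqrt{d}$. So $z \in \mathcal{S}^{d-2}_{\sqrt d}$. The dimension label is justified because this set is the round sphere of radius $\sqrt d$ in the $(d-1)$-dimensional subspace $\mathbf{1}_d^\perp$, which makes it a $(d-2)$-manifold. For surjectivity, observe that any $z$ in the sphere satisfies $\LN_0(z) = z$, since $\Pi_{\mathbf{1}}^\perp z = z$ and $\|z\|_2/\sqrt d = 1$. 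Thus the map is onto the sphere and idempotent there, i.e., a genuine projection.

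\textbf{Step 3: invariance.} For $\alpha > 0$ and $\beta \in \R$, linearity together with $\Pi_{\mathbf{1}}^\perp \mathbf{1}_d = 0$ gives $\Pi_{\mathbf{1}}^\perp(\alpha h + \beta\mathbf{1}_d) = \alpha x$. In particular $\alpha h + \beta\mathbf{1}_d \notin \operatorname{span}\{\mathbf{1}_d\}$, so $\LN_0$ is defined there. The numerator becomes $\alpha x$ and the denominator becomes $|\alpha|\,\|x\|_2/\sqrt d = \alpha\|x\|_2/\sqrt d$. The factor $\alpha$ cancels, which yields $\LN_0(\alpha h + \beta\mathbf{1}_d) = \LN_0(h)$. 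Positivity of $\alpha$ is essential here: for $\alpha < 0$ the output flips to its antipode.
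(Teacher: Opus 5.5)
Your proposal is correct and follows the paper's route: $\mathbf{1}_d^\top \Pi_{\mathbf{1}}^\perp = 0$ gives zero mean, a direct norm computation gives $\|\LN_0(h)\|_2 = \sqrt{d}$, and $\alpha$ cancels once you use $\Pi_{\mathbf{1}}^\perp(\alpha h + \beta\mathbf{1}_d) = \alpha\,\Pi_{\mathbf{1}}^\perp h$. Your additional checks go beyond what the paper writes without changing the argument: the explicit $\epsilon \to 0$ limit, the role of $h \notin \operatorname{span}\{\mathbf{1}_d\}$ in keeping the denominator nonzero, and surjectivity via $\LN_0(z) = z$ on the sphere.
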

\begin{proof}
By definition, $\mathbf{1}_d^\top (\Pi_{\mathbf{1}}^\perp h) = (\mathbf{1}_d^\top - \frac{d}{d}\mathbf{1}_d^\top)h = 0$, which ensures $\mu(\LN_0(h)) = 0$. Computing the Euclidean norm:
\begin{equation}
\|\LN_0(h)\|_2^2 = \frac{\|\Pi_{\mathbf{1}}^\perp h\|_2^2}{\frac{1}{d}\|\Pi_{\mathbf{1}}^\perp h\|_2^2} = d \implies \|\LN_0(h)\|_2 = \sqrt{d}.
\end{equation}
Scale invariance follows since $\Pi_{\mathbf{1}}^\perp (\alpha h + \beta \mathbf{1}_d) = \alpha \Pi_{\mathbf{1}}^\perp h$, and the scalar $\alpha > 0$ cancels identically in the numerator and denominator.
\end{proof}

For a sequence state matrix $H \in \R^{n \times d}$, row-wise normalization is denoted by $$\LN(H) = [\LN(h_1), \dots, \LN(h_n)]^\top \in \R^{n \times d}.$$

\subsection{Multi-Head Causal Self-Attention (Spatial Mixing)}
The primary mechanism enabling spatial communication across token positions is multi-head causal self-attention. Inside block $\mathcal{B}_\ell$, the input state $H^{(\ell-1)} \in \R^{n \times d}$ is first normalized to yield $Z^{(\ell)} = \LN_{\ell, 1}(H^{(\ell-1)})$. The normalized state is projected into $N_h$ parallel attention heads of latent dimension $d_k = d / N_h$ (for GPT-2-small, $N_h = 12$ and $d_k = 768 / 12 = 64$, see also Fig.~\ref{fig:architecture}).

For head $m \in \{1, \dots, N_h\}$, Query, Key, and Value matrices $Q^{(\ell, m)}, K^{(\ell, m)}, V^{(\ell, m)} \in \R^{n \times d_k}$ are generated via learned projection parameters $W_Q^{(\ell, m)}, W_K^{(\ell, m)}, W_V^{(\ell, m)} \in \R^{d \times d_k}$ and bias vectors $b_Q^{(\ell, m)}, b_K^{(\ell, m)}, b_V^{(\ell, m)} \in \R^{d_k}$:
\begin{equation}
\begin{aligned}
Q^{(\ell, m)} &= Z^{(\ell)} W_Q^{(\ell, m)} + \mathbf{1}_n b_Q^{(\ell, m)\top}, \\
K^{(\ell, m)} &= Z^{(\ell)} W_K^{(\ell, m)} + \mathbf{1}_n b_K^{(\ell, m)\top}, \\
V^{(\ell, m)} &= Z^{(\ell)} W_V^{(\ell, m)} + \mathbf{1}_n b_V^{(\ell, m)\top}.
\end{aligned}
\label{eq:qkv_proj}
\end{equation}

Inter-token affinities are computed through bilinear inner products scaled by $1/\sqrt{d_k}$. To enforce causality in decoder-only sequence generation (preventing position $i$ from attending to future positions $j > i$), an additive causal mask $M_n \in (\R \cup \{-\infty\})^{n \times n}$ is enforced:
\begin{equation}
[M_n]_{i,j} = \begin{cases} 0, & i \ge j, \\ -\infty, & i < j. \end{cases}
\label{eq:causal_mask}
\end{equation}
The masked pre-softmax affinity matrix $S^{(\ell, m)} \in \R^{n \times n}$ is:
\begin{equation}
S^{(\ell, m)} = \frac{Q^{(\ell, m)} (K^{(\ell, m)})^\top}{\sqrt{d_k}} + M_n.
\label{eq:scores}
\end{equation}

Applying the row-wise softmax operator produces the stochastic attention routing matrix $A^{(\ell, m)} \in \R^{n \times n}$:
\begin{equation}
[A^{(\ell, m)}]_{i,j} = \frac{\exp([S^{(\ell, m)}]_{i,j})}{\sum_{p=1}^i \exp([S^{(\ell, m)}]_{i,p})}, \quad j \le i,
\label{eq:attention_matrix}
\end{equation}
and $[A^{(\ell, m)}]_{i,j} = 0$ for all $j > i$. Thus, $A^{(\ell, m)}$ is lower triangular and strictly row-stochastic ($A^{(\ell, m)} \mathbf{1}_n = \mathbf{1}_n$, $[A^{(\ell, m)}]_{i,j} \ge 0$).

Each head mixes values along the spatial sequence dimension: $O^{(\ell, m)} = A^{(\ell, m)} V^{(\ell, m)} \in \R^{n \times d_k}$. The heads are concatenated along the feature dimension ($d$) and mapped through an output projection matrix $W_O^{(\ell)} \in \R^{d \times d}$ and bias $b_O^{(\ell)} \in \R^d$:
\begin{equation}
\MHA_\ell(Z^{(\ell)}) = \! \Big( \! \Concat_{m=1}^{N_h} \! \big[ A^{(\ell, m)} V^{(\ell, m)} \big] \! \Big) W_O^{(\ell)} + \mathbf{1}_n b_O^{(\ell)\top} \!.
\label{eq:mha_output}
\end{equation}

The intermediate state $\widetilde{H}^{(\ell)}$ is formed via the first residual stream connection:
\begin{equation}
\widetilde{H}^{(\ell)} = H^{(\ell-1)} + \MHA_\ell\big(\LN_{\ell, 1}(H^{(\ell-1)})\big).
\label{eq:residual_attn}
\end{equation}

\subsection{Position-Wise Multi-Layer Perceptron (Channel Mixing)}
While self-attention mixes state information along the spatial sequence axis ($n$), the position-wise Multi-Layer Perceptron (MLP) acts independently on each token row, performing nonlinear channel mixing across the latent feature dimension ($d$).

The intermediate state $\widetilde{H}^{(\ell)}$ is normalized via a second layer norm to yield $R^{(\ell)} = \LN_{\ell, 2}(\widetilde{H}^{(\ell)})$. The MLP expands the channel dimension by an expansion factor of 4 to $d_{\mathrm{mlp}} = 4d$ (for GPT-2-small, $d_{\mathrm{mlp}} = 3{,}072$), applies an elementwise activation function, and projects back to dimension $d$:
\begin{equation}
\MLP_\ell(R^{(\ell)}) = \sigma_{\operatorname{act}}\big( R^{(\ell)} W_{\mathrm{fc}}^{(\ell)} + \mathbf{1}_n b_{\mathrm{fc}}^{(\ell)\top} \big) W_{\mathrm{proj}}^{(\ell)} + \mathbf{1}_n b_{\mathrm{proj}}^{(\ell)\top},
\label{eq:mlp_matrix}
\end{equation}
where $W_{\mathrm{fc}}^{(\ell)} \in \R^{d \times 4d}$, $b_{\mathrm{fc}}^{(\ell)} \in \R^{4d}$, $W_{\mathrm{proj}}^{(\ell)} \in \R^{4d \times d}$, and $b_{\mathrm{proj}}^{(\ell)} \in \R^d$ are all pre-trained. In canonical transformer architectures, $\sigma_{\operatorname{act}}$ is the Gaussian Error Linear Unit ($\GELU$):
\begin{equation}
\GELU(x) =  x \! \int\limits_{-\infty}^x \! \frac{1}{\sqrt{2\pi}} e^{\scalebox{0.8}{$\frac{-s^2}{2}$}} \, \mathrm{d}s \equiv x \left( \frac{1}{2} \left[ 1 + \operatorname{erf}\left( \frac{x}{\sqrt{2}} \right) \right] \right).
\label{eq:gelu_def}
\end{equation}

The terminal block output is obtained through the second residual stream:
\begin{equation}
H^{(\ell)} = \widetilde{H}^{(\ell)} + \MLP_\ell\big(\LN_{\ell, 2}(\widetilde{H}^{(\ell)})\big).
\label{eq:residual_mlp}
\end{equation}

% \vspace{-4pt}

Equations \eqref{eq:residual_attn} and \eqref{eq:residual_mlp} define the block state transition map $\mathcal{B}_\ell : \R^{n \times d} \to \R^{n \times d}$, yielding the complete depth cascade:
% \begin{equation}
% H^{(\ell)} = \mathcal{B}_\ell(H^{(\ell-1)}), \quad \ell = 1, \dots, L.
% \label{eq:depth_cascade}
% \end{equation}
\begin{multline}
\hspace{-9pt} H^{(\ell)} = \mathcal{B}_\ell\big(H^{(\ell-1)}\big) \triangleq H^{(\ell-1)} + \MHA_\ell\big(\LN_{\ell,1}(H^{(\ell-1)})\big) 
\\
+ \MLP_\ell\!\left(\LN_{\ell,2}\Big(H^{(\ell-1)} + \MHA_\ell\big(\LN_{\ell,1}(H^{(\ell-1)})\big)\Big)\right)\!.\hspace{-9pt}
\label{eq:depth_cascade}
\end{multline}

% \vspace{-9pt}

\subsection{Unembedding Readout and Logit Generation}
After propagating through all $L$ transformer blocks, the sequence state $H^{(L)} \in \R^{n \times d}$ is processed by a final layer normalization operator $\LN_f$ and projected onto the vocabulary space via the unembedding matrix $W_u \in \R^{d \times V}$ (with $W_u = W_e^\top \in \R^{768 \times 50{,}257}$ under parameter weight-tying):
\begin{equation}
Z_{\mathrm{logits}}(s_t) = \LN_f(H^{(L)}) W_u \in \R^{n \times V}.
\label{eq:logits_readout}
\end{equation}
Because autoregressive sequence continuation depends strictly on predicting the successor to the current context string, only the terminal row $n$ of the logit matrix is~read~out:
\begin{equation}
z(s_t) = e_n^\top Z_{\mathrm{logits}}(s_t) \in \R^{1 \times V}.
\label{eq:terminal_logits}
\end{equation}
The logit vector $z(s_t)$ parameterizes the next-token probability distribution over alphabet $\Voc$ via the parameterized temperature-scaled softmax operator:
\begin{equation}
p_\theta(w_{n+1} = v_j \mid s_t) = \frac{\exp(z_j(s_t) / \tau)}{\sum_{k=1}^V \exp(z_k(s_t) / \tau)},
\label{eq:temp_distribution}
\end{equation}
where $\tau > 0$ is the generation temperature ($\tau = 1.0$ under standard nominal sampling).

\subsection{Key--Value Caching as an Exact Internal State Realization}
In computational practice, autoregressive token generation across context length $n$ is accelerated via Key--Value (KV) caching to avoid re-evaluating preceding sequence states across the depth cascade. In the machine learning literature, this procedure is routinely treated as an algorithmic runtime optimization or an implementation detail of the \mbox{attention operator}.

From a systems theory perspective, however, the KV cache fulfills the classical role of an internal state: a perpetually renewing initial condition that absorbs the entire history of past inputs into a minimal sufficient statistic. Rather than maintaining an expanding memory operator over the full historical sequence, causal prefix invariance ensures that the cached keys and values decouple past trajectory history from future evolution through an exact, instantaneous readout.

\begin{proposition}[Internal State Realization via Causal Prefix Invariance]
\label{prop:kv_state_realization}
Let $s = (w_1, \dots, w_n) \in \Voc^n$ be an admissible context sequence and let $s \circ w$ denote its single-token continuation with $w \in \Voc$. 
\\[5pt]
\noindent \textbf{1) Causal Prefix Invariance:} For every block $\ell \in \{1, \dots, L\}$ and token position $i \in \{1, \dots, n\}$, the hidden row states are invariant under future token arrivals:
\begin{equation}
e_i^\top H^{(\ell)}(s \circ w) = e_i^\top H^{(\ell)}(s).
\label{eq:prefix_invariance}
\end{equation}

\noindent \textbf{2) Exact State Realization:} The accumulated key and value tensors across all layers and heads,
\begin{equation}
X_n(s) \triangleq \left\{ \big(K^{(\ell,m)}(s),\,V^{(\ell,m)}(s)\big)\right\}_{\begin{subarray}{c} m\in\{1,\dots,N_{h}\} \\ \ell\in\{1,\dots,L\} \end{subarray}},
\label{eq:kv_state_tuple}
\end{equation}
constitute an exact internal state of the system:
\begin{itemize}
\item \textbf{Recursive State Transition:} The successor state $X_{n+1}$ is governed by a deterministic state flow map driven solely by the current state and incoming token~$w$:
\begin{equation}
X_{n+1} = \varphi(X_n, w).
\label{eq:exact_equivalence_state}
\end{equation}
\item \textbf{Instantaneous Readout:} The terminal logit vector \mbox{$z(s \circ w)$} defined through \eqref{eq:terminal_logits} admits an instantaneous readout map $\eta$ evaluated strictly from $X_n$ and $w$:
\begin{equation}
z(s \circ w) = \eta(X_n, w) \equiv z_{\mathrm{full}}(s \circ w),
\label{eq:exact_equivalence_output}
\end{equation}
yielding bit-level mathematical identity with full sequence re-evaluation. \hfill $\square$ 
\end{itemize}

\end{proposition}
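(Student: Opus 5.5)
Part 1 is proved by induction on the depth index $\ell$; Part 2 is then built directly on it. One caveat first: the positional embedding uses absolute positions. The sequence $s\circ w$ has length $n+1\le T$, so in this proposition truncation $\Pi_T$ is not applied. That is the regime in which the statement is meant.

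\textbf{Base case.} Row $i$ of $H^{(0)}$ in \eqref{eq:embed_matrix} equals $e_{\operatorname{idx}(w_i)}^\top W_e + p_i^\top W_p$. This depends only on the token $w_i$ and the position $i$. Because $P_{n+1}$ restricted to its first $n$ rows equals $P_n$ by \eqref{eq:pos_selection_matrix}, we get $e_i^\top H^{(0)}(s\circ w)=e_i^\top H^{(0)}(s)$ for $i\le n$.

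\textbf{Inductive step.} Assume $e_i^\top H^{(\ell-1)}(s\circ w)=e_i^\top H^{(\ell-1)}(s)$ for all $i\le n$. Propagate this through the block \eqref{eq:depth_cascade} one operator at a time.
\begin{itemize}
\item $\LN_{\ell,1}$, the projection/bias maps \eqref{eq:qkv_proj}, $\LN_{\ell,2}$ and $\MLP_\ell$ all act row-wise, with each output row depending only on the matching input row. So rows $i\le n$ of $Z^{(\ell)},Q,K,V$ coincide for $s$ and $s\circ w$.
\item The only cross-row operator is attention. By the mask \eqref{eq:causal_mask} and \eqref{eq:attention_matrix}, row $i$ of $A^{(\ell,m)}V^{(\ell,m)}$ equals $\sum_{j\le i}[A]_{ij}v_j$. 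The weights $[A]_{ij}$ use only $q_i$ and $k_1,\dots,k_i$, and the normalizing sum runs over $p\le i$. Hence row $i$ depends only on rows $1,\dots,i$ of $Z^{(\ell)}$, which agree by hypothesis.
\item Concatenation, $W_O^{(\ell)}$, the bias term and both residual additions are row-wise.
\end{itemize}
Together these give \eqref{eq:prefix_invariance} at level $\ell$.

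\textbf{Part 2: the map $\varphi$.} Write $k_i^{(\ell,m)},v_i^{(\ell,m)}$ for the rows of $K^{(\ell,m)},V^{(\ell,m)}$. By Part 1, the first $n$ rows of $K^{(\ell,m)}(s\circ w)$ and $V^{(\ell,m)}(s\circ w)$ are exactly those stored in $X_n$. So only the new row $n+1$ at every layer needs to be computed. Define it by a layer-recursive evaluation:
\begin{itemize}
\item Set $h_{n+1}^{(0)}=W_e^\top e_{\operatorname{idx}(w)}+W_p^\top p_{n+1}$, which is known from $w$ and the index $n$ (the number of rows of any cached $K$).
\item For $\ell=1,\dots,L$, compute $z_{n+1}^{(\ell)}$ and from it the new $q_{n+1},k_{n+1},v_{n+1}$ per head.
\item Form the attention output for row $n+1$ using the cached $k_{1..n}^{(\ell,m)},v_{1..n}^{(\ell,m)}$ from $X_n$ together with the new $k_{n+1},v_{n+1}$. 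This is possible because row $n+1$ of $A$ needs only $q_{n+1}$ and $k_1,\dots,k_{n+1}$.
\item Apply the residual, $\LN_{\ell,2}$, MLP and residual steps to obtain $h_{n+1}^{(\ell)}$.
\end{itemize}
Appending $(k_{n+1},v_{n+1})$ at every layer and head defines $\varphi(X_n,w)=X_{n+1}$. The readout is $\eta(X_n,w)=\LN_f(h_{n+1}^{(L)})^\top W_u$.

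\textbf{Exactness.} By \eqref{eq:terminal_logits}, $z(s\circ w)$ depends only on row $n+1$ of $H^{(L)}(s\circ w)$. We must show that this recursion reproduces that row exactly. The argument is a second induction on $\ell$: at each layer the recursion evaluates the same finite composition of arithmetic operations, on identical operands, that full re-evaluation applies to row $n+1$. This yields \eqref{eq:exact_equivalence_output} as an identity of functions.

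\textbf{Main obstacle.} The delicate step is the attention row with its causal normalization. I will show explicitly that the denominator in \eqref{eq:attention_matrix} for row $n+1$ ranges over $p\le n+1$ only, and that the $-\infty$ entries contribute exact zeros. This leaves no dependence on anything outside $X_n$ and $w$.

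The claim of bit-level identity is more than the algebra supports. It holds as exact real arithmetic, and in floating point only if both implementations perform the same reductions in the same order. I will state this caveat rather than assert hardware-level equality.
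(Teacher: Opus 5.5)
Your proof is correct and follows essentially the same route as the paper. Part~1 is an induction over the depth cascade in which the row-wise operators and the causal mask preserve the first $n$ rows; the paper phrases this as the block map being block-lower-triangular on the partition $[H_n^\top,\,h_{n+1}]^\top$, while you argue row by row. Part~2 is a layer-recursive computation of row $n+1$ from the cached keys and values, with $\varphi$ appending the new rows, and your two caveats are sound refinements that the paper leaves implicit: you need $n+1\le T$ so that no truncation or positional shift occurs, and bit-level identity holds in exact arithmetic but in floating point only when the reduction order is the same.
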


\begin{proof}
For Part~1, appending $w \in \Voc$ to $s \in \Voc^n$ augments the selection matrices \eqref{eq:selection_matrix}--\eqref{eq:pos_selection_matrix} with an additional bottom row: $U(s \circ w) = [U(s)^\top, \, e_{\operatorname{idx}(w)}]^\top$ and $P_{n+1} = [P_n^\top, \, p_{n+1}]^\top$. Substituting these into \eqref{eq:embed_matrix} establishes the base block structure of the initial continuous state:
\begin{equation}
H^{(0)}(s \circ w) = \begin{bmatrix} U(s) W_e + P_n W_p \\ e_{\operatorname{idx}(w)}^\top W_e + p_{n+1}^\top W_p \end{bmatrix} = \begin{bmatrix} H^{(0)}(s) \\ h_{n+1}^{(0)\top} \end{bmatrix}.
\label{eq:h0_partition}
\end{equation}
Now consider any state matrix partitioned as $H = [H_n^\top, \, h_{n+1}]^\top \in \R^{(n+1) \times d}$. By \eqref{eq:mu_sigma}, $\mu(e_i^\top H) = \mu(e_i^\top H_n)$ and $\sigma^2(e_i^\top H) = \sigma^2(e_i^\top H_n)$ for all $i \le n$, which ensures that row-wise layer normalization \eqref{eq:ln_def} preserves the partition: $Z^{(\ell)} = [Z_n^{(\ell)\top}, \, z_{n+1}^{(\ell)}]^\top$. Evaluating the affine projections \eqref{eq:qkv_proj} directly yields $K^{(\ell, m)} = [K_n^{(\ell, m)\top}, \, k_{n+1}^{(\ell, m)}]^\top$, with similar partitions holding for $Q^{(\ell, m)}$ and $V^{(\ell, m)}$.

In the causal affinity matrix \eqref{eq:scores}, the causal mask \eqref{eq:causal_mask} enforces $[M_{n+1}]_{i, n+1} = -\infty$ for all $i \le n$, which yields:
\begin{multline}
S^{(\ell, m)} = \begin{bmatrix} \frac{1}{\sqrt{d_k}} Q_n^{(\ell, m)} K_n^{(\ell, m)\top} + M_n & -\boldsymbol{\infty}_{n \times 1} \\ \frac{1}{\sqrt{d_k}} q_{n+1}^{(\ell, m)\top} K_n^{(\ell, m)\top} & \frac{1}{\sqrt{d_k}} q_{n+1}^{(\ell, m)\top} k_{n+1}^{(\ell, m)} \end{bmatrix} 
\\
= \begin{bmatrix} S_n^{(\ell, m)} & -\boldsymbol{\infty}_{n \times 1} \\ s_{\mathrm{cross}}^\top & s_{n+1, n+1} \end{bmatrix}.
\end{multline}
Applying the row-wise softmax \eqref{eq:attention_matrix} annihilates column $n+1$ over historical rows via $\exp(-\infty) = 0$, giving $A^{(\ell, m)} = \begin{bmatrix} A_n^{(\ell, m)} & \mathbf{0}_{n \times 1} \\ \alpha_{n+1}^{(\ell, m)\top} & a_{n+1, n+1}^{(\ell, m)} \end{bmatrix}$. The value mixing in \eqref{eq:mha_output} then evaluates to:
\begin{multline}
\hspace{-9pt} O^{(\ell, m)} = A^{(\ell, m)} V^{(\ell, m)} 
\\
= \begin{bmatrix} A_n^{(\ell, m)} V_n^{(\ell, m)} + \mathbf{0}_{n \times 1} v_{n+1}^{(\ell, m)\top} \\ \alpha_{n+1}^{(\ell, m)\top} V_n^{(\ell, m)} + a_{n+1, n+1}^{(\ell, m)} v_{n+1}^{(\ell, m)\top} \end{bmatrix} = \begin{bmatrix} O_n^{(\ell, m)} \\ o_{n+1}^{(\ell, m)\top} \end{bmatrix} \!. \hspace{-7pt}
\end{multline}
Because the head projection \eqref{eq:mha_output}, residual stream \eqref{eq:residual_attn}, second layer normalization \eqref{eq:ln_def}, MLP channel mixing \eqref{eq:mlp_matrix}, and residual addition \eqref{eq:residual_mlp} act strictly row-wise, the block transition map satisfies $\mathcal{B}([H_n^\top, \, h_{n+1}]^\top) = [\mathcal{B}(H_n)^\top, \, \tilde{h}_{n+1}]^\top$ as~well. Applying this block-lower triangular operator across the cascade \eqref{eq:depth_cascade} starting from \eqref{eq:h0_partition} yields that for all $\ell \in \{1, \dots, L\}$:
\begin{equation}
H^{(\ell)}(s \circ w) = \begin{bmatrix} H^{(\ell)}(s) \\ h_{n+1}^{(\ell)\top} \end{bmatrix},
\end{equation}
which establishes that causal prefix invariance $e_i^\top H^{(\ell)}(s \circ w)$ $= e_i^\top H^{(\ell)}(s)$ holds for all $i \le n$.

For Part~2, note that the terminal logit vector $z(s \circ w)$ is extracted from the $(n+1)$-st row of the final block via \eqref{eq:terminal_logits}:
\begin{equation}
z(s \circ w) = e_{n+1}^\top Z_{\mathrm{logits}}(s \circ w) = \LN_f(h_{n+1}^{(L)\top}) W_u.
\label{eq:readout_target}
\end{equation}
We now show that the row state $h_{n+1}^{(\ell)}$ at each depth $\ell \in \{1, \dots, L\}$ is evaluated recursively using only $h_{n+1}^{(\ell-1)}$ and the historical state cache $X_n(s)$ in \eqref{eq:kv_state_tuple}. At block $\ell$, the normalized query vector is:
\begin{equation}
z_{n+1}^{(\ell)\top} \!= \LN_{\gamma, \beta}^{(1)}(h_{n+1}^{(\ell-1)\top}), \quad q_{n+1}^{(\ell, m)\top} \!= z_{n+1}^{(\ell)\top} W_Q^{(\ell, m)} + b_Q^{(\ell, m)\top} \!.
\end{equation}
The row vector of attention weights assigned by the \mbox{$(n+1)$-st} token across historical positions $j \le n$ is given by the \mbox{$(n+1)$-st} row of \eqref{eq:attention_matrix}:
\begin{equation}
\alpha_{n+1}^{(\ell, m)\top} \!\!= \operatorname{softmax}\!\Big(\! \frac{1}{\sqrt{d_k}} \Big[ q_{n+1}^{(\ell, m)\top}\! K_n^{(\ell, m)\top}, \; q_{n+1}^{(\ell, m)\top}\! k_{n+1}^{(\ell, m)} \Big] \Big)_{\!1:n},
\label{eq:alpha_eval}
\end{equation}
where $k_{n+1}^{(\ell, m)\top} = z_{n+1}^{(\ell)\top} W_K^{(\ell, m)} + b_K^{(\ell, m)\top}$ and $v_{n+1}^{(\ell, m)\top} = z_{n+1}^{(\ell)\top} W_V^{(\ell, m)} + b_V^{(\ell, m)\top}$. By causal prefix invariance established in Part~1, the historical matrices in \eqref{eq:alpha_eval} satisfy:
\begin{equation}
K_n^{(\ell, m)} \equiv K^{(\ell, m)}(s), \qquad V_n^{(\ell, m)} \equiv V^{(\ell, m)}(s),
\end{equation}
which are identically the components of the cached state tuple $X_n(s)$ in \eqref{eq:kv_state_tuple}. The mixed value vector for the \mbox{$(n+1)$-st} token in \eqref{eq:mha_output} then evaluates directly to:
\begin{equation}
o_{n+1}^{(\ell, m)\top} = \alpha_{n+1}^{(\ell, m)\top} K_n^{(\ell, m)} V_n^{(\ell, m)} + a_{n+1, n+1}^{(\ell, m)} v_{n+1}^{(\ell, m)\top}.
\end{equation}
Concatenating heads and passing through the output projection in \eqref{eq:mha_output} yields the intermediate residual state:
\begin{equation}
\bar{h}_{n+1}^{(\ell)\top} = h_{n+1}^{(\ell-1)\top} + \Big[ \operatorname{Concat}_{m=1}^{N_h} \big( o_{n+1}^{(\ell, m)\top} \big) \Big] W_O + b_O^\top,
\end{equation}
followed by the channel-mixing MLP update \eqref{eq:mlp_matrix}--\eqref{eq:residual_mlp}:
\begin{equation}
h_{n+1}^{(\ell)\top} = \bar{h}_{n+1}^{(\ell)\top} + \operatorname{GELU}\big( \LN_{\gamma, \beta}^{(2)}(\bar{h}_{n+1}^{(\ell)\top}) W_{\mathrm{fc}} + b_{\mathrm{fc}}^\top \big) W_{\mathrm{proj}} + b_{\mathrm{proj}}^\top.
\label{eq:h_terminal_step}
\end{equation}
Because the state update $X_{n+1} = \varphi(X_n, w)$ simply appends the newly computed vectors $[k_{n+1}^{(\ell, m)\top}, \, v_{n+1}^{(\ell, m)\top}]$ to $X_n$ at each layer without altering existing entries, the forward propagation \eqref{eq:alpha_eval}--\eqref{eq:h_terminal_step} produces the terminal vector $h_{n+1}^{(L)\top}$ identically to that obtained by evaluating the full sequence from scratch. Applying the final unembedding map \eqref{eq:readout_target}~gives:
\begin{equation}
\eta(X_n, w) \triangleq \LN_f(h_{n+1}^{(L)\top}) W_u \equiv z_{\mathrm{full}}(s \circ w),
\end{equation}
which establishes the exactness of the state-space realization~\eqref{eq:exact_equivalence_state}--\eqref{eq:exact_equivalence_output}.
\end{proof}

%=======================================================================
\section{The Uncontrolled Generation Recursion}
\label{sec:generation}
%=======================================================================
Across the generation clock $t \in \mathbb{N}$, sequence rollout operates as the unforced discrete-time stochastic recursion~\eqref{eq:concat_recursion_intro}. In the absence of exogenous reference or inputs, the state trajectory evolves on $\mathcal{V}_T^*$ driven strictly by internal stochastic sampling from \eqref{eq:temp_distribution} under the context projection $\Pi_T$.

%-----------------------------------------------------------------------
\subsection{Markovian Structure and Deterministic Periodicity}
\label{sec:markov_periodicity}

\begin{proposition}[Finite-State Markov Representation]
\label{prop:markov_chain}
Under context truncation $\Pi_T$, the unforced recursion \eqref{eq:concat_recursion_intro} defines a time-homogeneous Markov chain on the finite state space $\mathcal{V}_T^*$ with transition matrix $\bm{\mathcal{P}} \in \mathbb{R}^{|\mathcal{V}_T^*| \times |\mathcal{V}_T^*|}$:
\begin{equation}
\bm{\mathcal{P}}_{s, \tilde{s}} = 
\begin{cases}
p_\theta(w \mid s), & \text{if } \exists w \in \mathcal{V} \text{ s.t. } \tilde{s} = \Pi_T(s \circ w), \\[2pt]
0, & \text{otherwise},
\end{cases}
\label{eq:markov_transition_kernel}
\end{equation}
and admits at least one stationary distribution $\pi^* \in \Delta^{|\mathcal{V}_T^*|-1}$ satisfying $\pi^* \bm{\mathcal{P}} = \pi^*$.
\hfill $\square$ \end{proposition}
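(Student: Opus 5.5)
The plan is to verify three things in turn: that the state space is finite, that the recursion \eqref{eq:concat_recursion_intro} is a time-homogeneous Markov chain with kernel \eqref{eq:markov_transition_kernel}, and that a stationary distribution exists.

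\textbf{Finiteness.} This is immediate: $|\mathcal{V}_T^*| = \sum_{n=1}^T V^n < \infty$. Moreover $\Pi_T$ maps $\mathcal{V}^*$ into $\mathcal{V}_T^*$, so $s_t \in \mathcal{V}_T^*$ for all $t$ whenever $s_0 \in \mathcal{V}_T^*$.

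\textbf{Markov property and homogeneity.} Conditioned on the full history $(s_0,\dots,s_t)$, the token $w_{t+1}$ is drawn from $p_\theta(\cdot\mid s_t)$. By \eqref{eq:embed_matrix}--\eqref{eq:temp_distribution}, this is a deterministic function of $s_t$ alone: the parameters and $\tau$ are fixed, and positions are indexed relative to the truncated window via $P_n$. It therefore depends on neither $t$ nor the earlier states. Since $s_{t+1} = \Pi_T(s_t\circ w_{t+1})$ is a deterministic function of $(s_t, w_{t+1})$,
\[
\Pr(s_{t+1}=\tilde s\mid s_0,\dots,s_t) \;=\; \sum_{w\in\mathcal{V}:\,\Pi_T(s_t\circ w)=\tilde s} p_\theta(w\mid s_t),
\]
and this is independent of $t$.

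\textbf{Matching the kernel.} The one step needing care is to reconcile this sum with the single-term form of \eqref{eq:markov_transition_kernel}. I would show that $w\mapsto \Pi_T(s\circ w)$ is injective, because the last symbol of $\Pi_T(s\circ w)$ is $w$ itself. Hence at most one $w$ satisfies $\tilde s=\Pi_T(s\circ w)$, so the kernel is well defined and the sum collapses to the stated expression. Row-stochasticity then follows: the map $w\mapsto \Pi_T(s\circ w)$ is a bijection onto its image, so each row of $\bm{\mathcal{P}}$ is a relabeling of the softmax vector \eqref{eq:temp_distribution}, which has positive entries summing to one.

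\textbf{Stationary distribution.} For existence I would invoke the standard fact that every stochastic matrix on a finite set has a stationary distribution. Either of two routes works:
\begin{itemize}
\item By Perron--Frobenius, $\bm{\mathcal{P}}\one=\one$ gives eigenvalue $1$, and $\bm{\mathcal{P}}^\top$ has a nonnegative eigenvector for eigenvalue $1$, which normalizes to $\pi^*$.
\item By Brouwer (or a Cesàro/Krylov--Bogoliubov argument), take a limit point of $\frac1N\sum_{t<N}\pi_0\bm{\mathcal{P}}^t$ in the compact simplex $\Delta^{|\mathcal{V}_T^*|-1}$; it is invariant because $\pi_0\bm{\mathcal{P}}^N/N\to 0$.
\end{itemize}
The only real obstacle is the injectivity and well-definedness step above; uniqueness is not claimed and need not be addressed.
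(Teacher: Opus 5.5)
Your proposal is correct and follows essentially the same route as the paper. The Markov property and time-homogeneity come from $p_\theta(\cdot\mid s_t)$ and $\Pi_T$ depending only on the current context, and existence of $\pi^*$ comes from finiteness of $\mathcal{V}_T^*$ plus Perron--Frobenius. Your further observation that $w\mapsto\Pi_T(s\circ w)$ is injective (its last symbol is $w$) is a detail the paper's proof leaves implicit: it makes the single-term kernel well defined and ensures each row of $\bm{\mathcal{P}}$ sums to one.
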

\begin{proof}
Since both the sampling distribution $p_\theta(\cdot | s_t)$ and the projection $\Pi_T$ depend solely on the current state $s_t$, the sequence $\{s_t\}_{t \ge 0}$ is stationary and Markovian. 

Moreover, because $|\mathcal{V}_T^*| = \sum_{n=1}^T V^n < \infty$, the existence of $\pi^*$ follows directly from the Perron--Frobenius theorem on the compact simplex $\Delta^{|\mathcal{V}_T^*|-1}$.
\end{proof}

When the sampling temperature vanishes ($\tau \to 0^+$), stochasticity collapses. Since the pre-trained weights are fixed across generation steps, the system reduces to an unforced, time-invariant, deterministic finite-state machine.

\begin{proposition}[Ultimate Periodicity of Greedy Orbits]
\label{prop:greedy_periodicity}
Let greedy decoding be denoted by $s_{t+1} = \mathcal{F}_{\mathrm{greedy}}(s_t) \triangleq \Pi_T(s_t \circ \argmax_{v \in \mathcal{V}} z_v(s_t))$ with deterministic tie-breaking. For any prompt $s_0 \in \mathcal{V}_T^*$, the state orbit $\{s_t\}_{t \ge 0}$ is ultimately periodic, i.e., there exist $t_0 < |\mathcal{V}_T^*|$ and a period $T_{\mathrm{per}} \le |\mathcal{V}_T^*|$ such that:
\begin{equation}
s_{t + T_{\mathrm{per}}} = s_t, \qquad \forall t \ge t_0.
\label{eq:periodic_limit_cycle}
\end{equation}
\hfill $\square$ \end{proposition}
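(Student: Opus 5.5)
The plan is a pigeonhole argument on the deterministic finite-state map $\mathcal{F}_{\mathrm{greedy}}$, together with a check that this map is well defined.

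First, I will verify that $\mathcal{F}_{\mathrm{greedy}}$ is a self-map of the finite set $\mathcal{V}_T^*$. For any $s \in \mathcal{V}_T^*$, the logit vector $z(s)$ is a deterministic function of $s$: the depth cascade \eqref{eq:depth_cascade} and the readout \eqref{eq:terminal_logits} involve no randomness and use fixed weights. The vocabulary $\mathcal{V}$ is finite, so the argmax over it exists. With deterministic tie-breaking (say, smallest index), it is a single token $v^*(s) \in \mathcal{V}$. The concatenation $s \circ v^*(s)$ has length between $2$ and $T+1$, and $\Pi_T$ from \eqref{eq:trunc} maps it back into $\mathcal{V}_T^*$. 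Hence $\mathcal{F}_{\mathrm{greedy}} : \mathcal{V}_T^* \to \mathcal{V}_T^*$ is a time-invariant function. As noted in the proof of Proposition~\ref{prop:markov_chain}, $N \triangleq |\mathcal{V}_T^*| = \sum_{n=1}^T V^n < \infty$.

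Next, I will apply the pigeonhole principle. Consider the first $N+1$ iterates $s_0, s_1, \dots, s_N$, where $s_{t+1} = \mathcal{F}_{\mathrm{greedy}}(s_t)$. They take values in a set of size $N$, so there exist indices $0 \le t_0 < t_1 \le N$ with $s_{t_0} = s_{t_1}$; choose $t_0$ minimal. Set $T_{\mathrm{per}} = t_1 - t_0 \ge 1$. Then $T_{\mathrm{per}} \le N$ and $t_0 \le N - 1 < N$, which gives the stated bounds $t_0 < |\mathcal{V}_T^*|$ and $T_{\mathrm{per}} \le |\mathcal{V}_T^*|$.

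Finally, I will propagate the periodicity by induction on $t \ge t_0$. The base case is $s_{t_0 + T_{\mathrm{per}}} = s_{t_1} = s_{t_0}$. For the inductive step, if $s_{t+T_{\mathrm{per}}} = s_t$, then applying the single-valued map $\mathcal{F}_{\mathrm{greedy}}$ to both sides gives $s_{t+1+T_{\mathrm{per}}} = s_{t+1}$. This establishes \eqref{eq:periodic_limit_cycle}.

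I do not expect a genuine obstacle here. The one point needing care is the well-definedness of $\mathcal{F}_{\mathrm{greedy}}$ as a single-valued map. This relies on the deterministic tie-breaking hypothesis and on the argmax being taken over the finite set $\mathcal{V}$, so that equal states always produce equal successors.
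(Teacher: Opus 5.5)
Your proposal is correct and follows the same argument as the paper: pigeonhole on the first $|\mathcal{V}_T^*|+1$ iterates, then propagation of the repeat by determinism and time-invariance of $\mathcal{F}_{\mathrm{greedy}}$. Your explicit check that $\mathcal{F}_{\mathrm{greedy}}$ is a single-valued self-map of $\mathcal{V}_T^*$, and your induction step, spell out what the paper leaves implicit.
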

\begin{proof}
Because $\mathcal{V}_T^*$ is finite, any orbit of length $|\mathcal{V}_T^*| + 1$ must revisit a state by the pigeonhole principle: $s_{t_1} = s_{t_2}$ for some $0 \le t_1 < t_2 \le |\mathcal{V}_T^*|$. Since $\mathcal{F}_{\mathrm{greedy}}$ is time-invariant and deterministic, the trajectories initiated from $s_{t_1}$ and $s_{t_2}$ are identical: $s_{t_1+k} = s_{t_2+k}$ for all $k \ge 0$. Setting $t_0 = t_1$ and $T_{\mathrm{per}} = t_2 - t_1$ completes the proof.
\end{proof}

Proposition~\ref{prop:greedy_periodicity} shows that repetitive looping in unforced greedy decoding is a direct consequence of deterministic difference equations evolving on finite state spaces.

%-----------------------------------------------------------------------
%-----------------------------------------------------------------------

%-----------------------------------------------------------------------
%-----------------------------------------------------------------------
\subsection{Task-Level Error Processes}
\label{sec:error_drift}

Proposition~\ref{prop:markov_chain} characterizes the stochastic
evolution of~$s_t$, but not task correctness.
Metrics on neither $s_t \in \mathcal{V}_T^*$ nor its embedded state
$H^0\in\mathbb{R}^{n\times d}$ in \eqref{eq:embed_matrix} generally
measure task error. Let $(\mathcal{X},d_{\mathcal{X}})$ be a metric
task space, let
\mbox{$\mathcal{S}_{\mathrm{eval}}(s_0)\subset\mathcal{V}_T^*$} contain
contexts with an evaluable claim, and let
$\Psi:\mathcal{S}_{\mathrm{eval}}(s_0)\to\mathcal{X}$ be a
task-dependent readout.

Let $\tau_1<\tau_2<\cdots$ be the successive generation instants for
which $s_{\tau_j}\in\mathcal{S}_{\mathrm{eval}}(s_0)$. Define
\begin{equation}
\mathsf{e}_j\triangleq
d_{\mathcal{X}}\!\left(
    \Psi(s_{\tau_j}),x^*_{s_0,j}
\right),
\qquad
\mathcal{G}_j\triangleq
\sigma(s_0,w_1,\ldots,w_{\tau_j}),
\label{eq:epistemic_error_def}
\end{equation}
where $x^*_{s_0,j}\in\mathcal{X}$ is the reference value for the
$j$th evaluated claim. If $\Psi$ identifies a malformed claim, set
$\mathsf{e}_j=\infty$. Incomplete prefixes have no associated error
value. Thus, $\mathsf{e}_j$ measures the current evaluated claim and
may decrease after self-correction.

The transition kernel \eqref{eq:markov_transition_kernel}, together
with $\Psi$, induces the error process $\{\mathsf{e}_j\}$. The
properties below concern a fixed model, decoding rule, prompt class,
and evaluator; they do not follow from the architecture alone.

For each $j\geq1$, let
\begin{equation}
\mathsf{E}_j(s_0)
\triangleq
\bigcap_{i=1}^{j}
\left\{
d_{\mathcal{X}}\!\left(
    \Psi(s_{\tau_i}),x^*_{s_0,i}
\right)=0
\right\}
=
\bigcap_{i=1}^{j}\{\mathsf{e}_i=0\}.
\label{eq:fault_free_event}
\end{equation}
Hence, $\mathsf{E}_j(s_0)$ is the event that the first $j$ evaluated
claims generated from $s_0$ are correct. Let $\mathsf{E}_0(s_0)$
denote the sure event.

\begin{assumption}[Conditional Fault Hazard]
\label{assump:fault_hazard}
Assume that there exists $\alpha\in(0,1)$ such that
\begin{equation}
\mathbb{P}\!\left(
\mathsf{e}_{j+1}>0\mid\mathcal{G}_j
\right)
\geq \alpha
\quad\text{a.s.\ on }\mathsf{E}_j(s_0).
\label{eq:fault_hazard}
\end{equation}
\end{assumption}

\begin{proposition}[Fault-Free Reliability]
\label{thm:geometric_decay}
Under Assumption~\ref{assump:fault_hazard}, for every
$M\in\mathbb{N}$,
\begin{equation}
\mathbb{P}\!\left(\mathsf{E}_M(s_0)\right)
\equiv
\mathbb{P}\!\left(
    \bigcap_{i=1}^{M}\{\mathsf{e}_i=0\}
\right)
\leq
(1-\alpha)^M.
\label{eq:geometric_bound}
\end{equation}
\hfill $\square$
\end{proposition}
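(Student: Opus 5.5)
The plan is induction on $M$ via the tower property, peeling off one evaluated claim at a time. Write $\mathsf{E}_{j+1}(s_0)=\mathsf{E}_j(s_0)\cap\{\mathsf{e}_{j+1}=0\}$. First I will verify measurability: since $\mathsf{e}_i$ is a function of $s_{\tau_i}$, which is determined by $(s_0,w_1,\ldots,w_{\tau_i})$, and $\tau_i\le\tau_j$ for $i\le j$, the event $\mathsf{E}_j(s_0)$ is $\mathcal{G}_j$-measurable. Here I treat $\mathcal{G}_j$ as the stopped $\sigma$-algebra at the stopping time $\tau_j$ of the natural filtration of the token process, so that $\{\mathcal{G}_j\}$ is increasing in $j$. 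This is the one point that needs some care. The $\tau_j$ are hitting times of $\mathcal{S}_{\mathrm{eval}}(s_0)$ and hence stopping times. If some $\tau_j$ may be infinite, I adopt the convention that $\mathsf{e}_{j}$ is then undefined and the event $\{\mathsf{e}_j=0\}$ fails, which only shrinks $\mathsf{E}_M$.

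The key step is the one-step contraction
\begin{equation*}
\mathbb{P}(\mathsf{E}_{j+1}(s_0))
=\mathbb{E}\big[\mathbf{1}_{\mathsf{E}_j(s_0)}\,\mathbb{P}(\mathsf{e}_{j+1}=0\mid\mathcal{G}_j)\big]
\le(1-\alpha)\,\mathbb{P}(\mathsf{E}_j(s_0)).
\end{equation*}
The equality follows by conditioning on $\mathcal{G}_j$ and pulling out the $\mathcal{G}_j$-measurable indicator. The inequality uses Assumption~\ref{assump:fault_hazard}: a.s.\ on $\mathsf{E}_j(s_0)$ we have $\mathbb{P}(\mathsf{e}_{j+1}=0\mid\mathcal{G}_j)=1-\mathbb{P}(\mathsf{e}_{j+1}>0\mid\mathcal{G}_j)\le 1-\alpha$. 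Since $\mathsf{e}_{j+1}\in[0,\infty]$, the events $\{\mathsf{e}_{j+1}=0\}$ and $\{\mathsf{e}_{j+1}>0\}$ are complementary; the malformed case $\mathsf{e}=\infty$ falls in the latter.

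Starting from $\mathbb{P}(\mathsf{E}_0(s_0))=1$ (the sure event, with $\mathcal{G}_0=\sigma(s_0)$), I iterate this inequality $M$ times to obtain $(1-\alpha)^M$, which is exactly \eqref{eq:geometric_bound}. I expect the main obstacle to be only the formal setup: $\mathcal{G}_j$ must be a genuine filtration containing $\mathsf{E}_j$, and the conditional probabilities must be well defined on possibly non-terminating trajectories. The probabilistic core is a routine telescoping of conditional survival probabilities. No earlier architectural result is needed beyond Proposition~\ref{prop:markov_chain}, which guarantees the underlying process is well defined.
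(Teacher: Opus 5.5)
Your proposal is correct and is essentially the paper's proof: the one-step contraction $\mathbb{P}(\mathsf{E}_{j+1}(s_0))\le(1-\alpha)\,\mathbb{P}(\mathsf{E}_j(s_0))$, obtained from $\mathsf{E}_{j+1}(s_0)=\mathsf{E}_j(s_0)\cap\{\mathsf{e}_{j+1}=0\}$ and Assumption~\ref{assump:fault_hazard}, is iterated from $\mathbb{P}(\mathsf{E}_0(s_0))=1$, and your stopped-$\sigma$-algebra bookkeeping just makes explicit the measurability $\mathsf{E}_j(s_0)\in\mathcal{G}_j$ that the paper uses implicitly. One small point: under your convention for $\tau_{j+1}=\infty$, the events $\{\mathsf{e}_{j+1}=0\}$ and $\{\mathsf{e}_{j+1}>0\}$ are only guaranteed to be disjoint, not complementary, so your displayed equality should read $\mathbb{P}(\mathsf{e}_{j+1}=0\mid\mathcal{G}_j)\le 1-\mathbb{P}(\mathsf{e}_{j+1}>0\mid\mathcal{G}_j)$, which is all the argument needs.
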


\begin{proof}
Since
$\mathsf{E}_{j+1}(s_0)
=\mathsf{E}_j(s_0)\cap\{\mathsf{e}_{j+1}=0\}$, it follows from
Assumption~\ref{assump:fault_hazard} that
\begin{equation*}
\mathbb{P}\!\left(\mathsf{E}_{j+1}(s_0)\right)
\leq
(1-\alpha)\mathbb{P}\!\left(\mathsf{E}_j(s_0)\right).
\end{equation*}
Iteration from $\mathbb{P}(\mathsf{E}_0(s_0))=1$ proves
\eqref{eq:geometric_bound}.
\end{proof}

Proposition~\ref{thm:geometric_decay} bounds the probability of a
fault-free claim history, not terminal correctness. A later claim may
correct an earlier fault without making the preceding history
\mbox{fault-free}.

\begin{assumption}[Conditional Error Drift]
\label{assump:error_drift}
Over an evaluation horizon $J$, assume that
$\{\mathsf{e}_j\}_{j=1}^{J}$ is integrable and that there exists
$\delta_{\mathrm{drift}}\geq0$ such that
\begin{equation}
\mathbb{E}\!\left[
    \mathsf{e}_{j+1}-\mathsf{e}_j\mid\mathcal{G}_j
\right]
\geq
\delta_{\mathrm{drift}},
\qquad 1\leq j<J.
\label{eq:drift_condition}
\end{equation}
\end{assumption}

\begin{proposition}[Expected Error Growth]
\label{thm:submartingale_drift}
Under Assumption~\ref{assump:error_drift},
$\{\mathsf{e}_j\}_{j=1}^{J}$ is a submartingale. For $j+m\leq J$,
\begin{equation}
\mathbb{E}\!\left[
    \mathsf{e}_{j+m}\mid\mathcal{G}_j
\right]
\geq
\mathsf{e}_j+m\delta_{\mathrm{drift}}.
\label{eq:submartingale_divergence}
\end{equation}
\hfill $\square$
\end{proposition}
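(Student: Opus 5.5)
The plan is to derive both claims directly from the drift condition \eqref{eq:drift_condition}, using linearity of conditional expectation, the tower property, and adaptedness. First I will verify adaptedness. Each $\tau_j$ is a hitting time of the set $\mathcal{S}_{\mathrm{eval}}(s_0)$ by the chain of Proposition~\ref{prop:markov_chain}, and $\mathsf{e}_j$ is a deterministic function of $s_{\tau_j}$ (through $\Psi$ and the fixed references $x^*_{s_0,j}$). Hence $\mathsf{e}_j$ is $\mathcal{G}_j$-measurable. Since $\tau_j<\tau_{j+1}$, the $\sigma$-algebras $\mathcal{G}_j=\sigma(s_0,w_1,\ldots,w_{\tau_j})$ increase with $j$, so $\{\mathcal{G}_j\}$ is a filtration. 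I will interpret this as the stopped $\sigma$-algebra $\mathcal{F}_{\tau_j}$ of the token filtration, which is what makes the monotonicity $\mathcal{G}_j\subseteq\mathcal{G}_{j+1}$ rigorous. Integrability is part of Assumption~\ref{assump:error_drift}; in particular it rules out $\mathsf{e}_j=\infty$ with positive probability on the horizon.

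For the submartingale property, I will use the fact that $\mathsf{e}_j$ is $\mathcal{G}_j$-measurable and integrable. This lets me rewrite \eqref{eq:drift_condition} as $\mathbb{E}[\mathsf{e}_{j+1}\mid\mathcal{G}_j]-\mathsf{e}_j\geq\delta_{\mathrm{drift}}\geq0$, which is exactly the one-step submartingale inequality for $1\le j<J$.

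For \eqref{eq:submartingale_divergence}, I will argue by induction on $m$. The case $m=0$ is trivial, and $m=1$ is the inequality just obtained. For the inductive step, assume $j+m+1\le J$. By the tower property with $\mathcal{G}_j\subseteq\mathcal{G}_{j+m}$,
\begin{equation*}
\mathbb{E}[\mathsf{e}_{j+m+1}\mid\mathcal{G}_j]
=\mathbb{E}\big[\mathbb{E}[\mathsf{e}_{j+m+1}\mid\mathcal{G}_{j+m}]\mid\mathcal{G}_j\big]
\geq\mathbb{E}[\mathsf{e}_{j+m}+\delta_{\mathrm{drift}}\mid\mathcal{G}_j]
\geq\mathsf{e}_j+(m+1)\delta_{\mathrm{drift}}.
\end{equation*}
The first inequality uses \eqref{eq:drift_condition} at index $j+m$ together with monotonicity of conditional expectation. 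The second uses the induction hypothesis.

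An alternative I may use in place of the induction is telescoping: write $\mathsf{e}_{j+m}-\mathsf{e}_j=\sum_{i=j}^{j+m-1}(\mathsf{e}_{i+1}-\mathsf{e}_i)$ and apply the tower property to each increment separately.

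The main obstacle is the measure-theoretic bookkeeping rather than the inequality itself. Two points need care. First, the $\tau_j$ must be finite almost surely on the horizon $j\le J$, so that $\mathsf{e}_j$ and $\mathcal{G}_j$ are well defined; I will take this as implicit in the integrability hypothesis. Second, $\mathcal{G}_j$ must be read as a stopped $\sigma$-algebra, so that the tower property applies. Once these are fixed, the remaining steps are routine.
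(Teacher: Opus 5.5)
Your proposal is correct and follows essentially the same route as the paper. You use the $\mathcal{G}_j$-measurability and integrability of $\mathsf{e}_j$ to rewrite the drift condition as the one-step submartingale inequality, then iterate via the tower property; your induction on $m$ just makes that iteration explicit. Your extra bookkeeping fills in details the paper leaves implicit under ``by construction'': you read $\mathcal{G}_j$ as the stopped $\sigma$-algebra $\mathcal{F}_{\tau_j}$ so that it is a genuine filtration, and you require $\tau_j<\infty$ almost surely on the horizon.
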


\begin{proof}
The process $\{\mathsf{e}_j\}$ is $\mathcal{G}_j$-adapted and
integrable by construction and Assumption~\ref{assump:error_drift}.
Since
\begin{equation*}
\mathbb{E}[\mathsf{e}_{j+1}\mid\mathcal{G}_j]
=
\mathsf{e}_j+
\mathbb{E}[
    \mathsf{e}_{j+1}-\mathsf{e}_j\mid\mathcal{G}_j
]
\geq
\mathsf{e}_j,
\end{equation*}
$\{\mathsf{e}_j,\mathcal{G}_j\}_{j=1}^{J}$ is a submartingale by
definition. Iterating \eqref{eq:drift_condition} by the tower property
gives \eqref{eq:submartingale_divergence}.
\end{proof}

The task-level quantities above separate architectural structure from
behavioral performance. The Markov kernel specifies the distribution
of generated contexts, whereas the evaluator $\Psi$ and the
assumptions on $\alpha$ and $\delta_{\mathrm{drift}}$ specify a
task-dependent reliability model. Their identification for particular
models, decoders, and task classes is left for future~work.

%=======================================================================
% %=======================================================================
% \section{Conclusion}
% \label{sec:conclusion}
% %=======================================================================

%=======================================================================
\section{Conclusion}
\label{sec:conclusion}
%=======================================================================

This paper establishes a control-theoretic model of decoder-only
language models. It separates the deterministic transformer cascade
across layer depth from the uncontrolled stochastic recursion across
token-generation time, and embeds both within a hybrid description of
agentic execution and context updates. The model makes the associated
state spaces, clocks, and transition mechanisms explicit.

Within this formulation, familiar system-theoretic questions admit
precise statements and direct analysis: Causal~prefix invariance makes
Key--Value caching an exact internal state realization. Context
truncation induces a time-homogeneous finite-state Markov
representation, while fixed parameters and deterministic tie-breaking
make greedy decoding ultimately periodic. At the task level, a
prompt-dependent evaluator induces an error process over successive
evaluable claims. Conditional fault-hazard and error-drift assumptions
then yield bounds on fault-free claim histories and expected error
growth, respectively.

The architectural results follow from the stated decoder dynamics.
The task-level results instead depend on an evaluator and on
model-, decoder-, and task-specific assumptions. Their identification,
the role of external information in the induced error process, and a
detailed stochastic hybrid-systems treatment of agentic language
models are left for future work.

\bibliographystyle{IEEEtran}
\bibliography{LLMReferences}

\end{document}